\documentclass[12pt]{amsart}
\usepackage{amsfonts,newlfont,latexsym,enumerate}
\usepackage{euscript}
\usepackage{amssymb,amscd}
\usepackage{color}

\newtheorem{theorem}{Theorem}[section]

     \newtheorem{cor}[theorem]{Corollary}
     \newtheorem{prop}[theorem]{Proposition}
\theoremstyle{definition}
     \newtheorem{remark}[theorem]{Remark}
     \newtheorem{example}[theorem]{Example}
     
     \newtheorem{question}[theorem]{Question}

\newcommand{\inv}{^{-1}}
\newcommand{\N}{\mathbf N}
\newcommand{\F}{\mathbf F}
\newcommand{\Z}{\mathbf Z}
\newcommand{\A}{\mathbf A}
\newcommand{\K}{\mathbf K}

\newcommand{\DL}{\textnormal{DL}}
\newcommand{\Ker}{\textnormal{Ker}}
\newcommand{\Aut}{\textnormal{Aut}}
\newcommand{\HNN}{\textnormal{HNN}}

\DeclareMathOperator{\Isom}{Isom}

\begin{document}

\title{Cross-wired lamplighter groups}

\thanks{D.\ F.\ was partially supported by NSF grant DMS 0643546}


\author[Cornulier]{Yves de Cornulier}
\address[Y.C.]{Laboratoire de Math\'ematiques\\
B\^atiment 425, Universit\'e Paris-Sud 11\\
91405 Orsay\\FRANCE}
\email{yves.cornulier@math.u-psud.fr}

\author[Fisher]{David Fisher}
\author[Kashyap]{Neeraj Kashyap}
\address[D.F. and N.K.]{Department of Mathematics
Indiana University -- Bloomington
Rawles Hall\\
Bloomington\\ IN 47401\\ USA
}
\email{fisherdm@indiana.edu, nkashyap@indiana.edu}

\date{June 2, 2012}
\subjclass[2010]{05C63 (primary); 20E22 (secondary)}


\maketitle
\begin{abstract}We give a necessary and sufficient condition for a locally compact group
to be isomorphic to a closed cocompact subgroup in the isometry group of a Diestel-Leader graph. As a consequence
of this condition, we see that every cocompact lattice in the isometry group of a Diestel-Leader
graph admits a transitive, proper action on some other Diestel-Leader graph. We also
give some examples of lattices that are not virtually lamplighters. This implies the class of discrete groups commensurable to lamplighter groups is not
closed under quasi-isometries and, combined with work of
Eskin, Fisher and Whyte, gives a characterization of
their quasi-isometry class.
\end{abstract}



\section{Introduction}

A lamplighter group is a group of the form $F\wr \mathbf{Z}$ where
$F$ is some finite group. For one particular choice of generators, the Cayley graphs of these groups are examples of Diestel-Leader graphs $\DL(n,n)$ (\cite{DiestelLeader,Woess, Wortman}), which can be defined as follows (see Section \ref{idl} for details): let $T_n$ be the $(n+1)$-regular tree and $b$ a Busemann function on $T_n$. The Diestel-Leader graph of degree $n$ is defined as
$$\DL(n,n)=\{(x,y)\in T_n\times T_n:b(x)+b(y)=0\}.$$
This is a connected graph, where $\{(x,y),(x',y')\}$ is an edge whenever both $\{x,x'\}$ and $\{y,y'\}$ are edges.
Eskin, Fisher, and Whyte have shown (\cite{EFW1, EFW2, EFW3}) that a finitely generated group is quasi-isometric to a
lamplighter group $F\wr\Z$ if and only if it acts properly and cocompactly by
isometries on a Diestel-Leader graph $\DL(n,n)$, where $n$ and $|F|$ have a common power, i.e.\ it is a cocompact lattice in the isometry group
of some Diestel-Leader graph. In this paper, we address the
question of whether or not the lamplighter groups are
quasi-isometrically rigid by studying the cocompact lattices in the isometry group $\Isom(\DL(n,n))$
of the Diestel-Leader graphs. While we show that these lattices are not necessarily lamplighter groups,
we do give an algebraic characterization of them. We give cocompact lattices in $\Isom(\DL(n,n))$ the name
of {\em cross-wired lamplighters}, a terminology we explain briefly at the end of the paper.

An isometry of $\DL(n,n)$ is called {\it positive} if it is the restriction to $\DL(n,n)$ of some product isometry $(\alpha,\beta)\in\Aut(T_n\times T_n)$ of $T_n\times T_n$. It can be shown that a non-positive isometry of $\DL(n,n)$ is the composition of a positive isometry and the flip $(x,y)\mapsto (y,x)$, so that the group $\Isom^+(\DL(n,n))$ of positive isometries has index two in $\Isom(\DL(n,n))$ (see Section \ref{idl}).  Rather than just proving a theorem about lattices in $\Isom(\DL(n,n))$, we will prove a more general result about closed, cocompact subgroups of $\Isom(\DL(m,n))$.  In the case when $m \neq n$, $\Isom(\DL(m,n))$ is not unimodular, so there are no lattices, but understanding closed, cocompact subgroups is also of interest.  When $m \neq n$, all isometries are positive.

\begin{theorem}Suppose $m,n\ge 2$.

\begin{enumerate}[(a)]
\item\label{tta} Let $\Gamma$ be a closed, cocompact subgroup of $\Isom^+(\DL(m,n))$. Then $\Gamma$ has a unique open normal subgroup $H$ such that $\Gamma/H$ is infinite cyclic. Moreover, if $t$ is any element of $\Gamma$ mapping to a generator of $\Gamma/H$, then $H$ has two open subgroups $L,L'$ such that
\begin{itemize}
\item $tLt^{-1}$ and $t^{-1}L't$ are subgroups of index $m$ and $n$ in $L$ and $L'$ respectively;
\item $\bigcup_{k\in\Z}t^{-k}Lt^k=\bigcup_{k\in\Z}t^{k}L't^{-k}=H$ (these are increasing unions);
\item $L\cap L'$ is compact;
\item $LL'=H$ (that is, the double coset space $L\backslash H/L'$ is  reduced to a point).
\end{itemize}
Moreover, $\Gamma$ has no non-trivial compact normal subgroup and if $\Gamma$ is discrete, then $m=n$.

\item\label{ttb} Conversely, let $\Gamma$ be a locally compact group, with a semidirect product decomposition $\Gamma=H\rtimes\langle t\rangle$, with $H$ non-compact. Assume that $H$ has open subgroups $L,L'$ such that
\begin{itemize}
\item $tLt^{-1}$ and $t^{-1}L't$ are finite index subgroups, of index $m$ and $n$, in $L$ and $L'$ respectively;
\item $\bigcup_{k\in\Z}t^{-k}Lt^k=\bigcup_{k\in\Z}t^{k}L't^{-k}=H$ (these are increasing unions);
\item $L\cap L'$ is compact;
\item the double coset space $L\backslash H/L'$ is finite of cardinality $d$.
\end{itemize}
Then $H$ is locally finite, and $\Gamma$ has a proper, transitive action on $\DL(m,n)$, whose kernel is $\bigcap_{k\in\Z}t^{-k}(L\cap L')t^k$.  Moreover
if $\Gamma$ is discrete, then it is finitely generated and $m=n$.
\end{enumerate}\label{mainthm}
\end{theorem}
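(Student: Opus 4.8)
The plan is to treat the two directions in tandem since they share a common combinatorial picture: the Diestel–Leader graph $\DL(m,n)$ carries a natural projection onto a line (the common level set of the two Busemann functions), and a closed cocompact group of positive isometries must respect this line up to the action on $\Z$. For part (a), I would first produce the homomorphism $\Gamma\to\Z$. Recall $\DL(m,n)\subseteq T_m\times T_n$ and the Busemann function $b$ on $T_m$ satisfies $b(x)=-b(y)$ on the graph; composing $b$ with the first projection gives a height function $h:\DL(m,n)\to\Z$, and every positive isometry either preserves all heights or shifts them all by a fixed integer. This yields a homomorphism $\Isom^+(\DL(m,n))\to\Z$; I would set $H$ to be the intersection of $\Gamma$ with the kernel (the height-preserving isometries), and check that cocompactness forces the image of $\Gamma$ to be a nontrivial, hence infinite cyclic, subgroup of $\Z$ — otherwise $\Gamma$ would be a subgroup of the height-stabilizer, which acts on a single horocyclic product level and cannot be cocompact. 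Uniqueness of $H$ follows because any open normal subgroup with infinite cyclic quotient must contain all the "vertical translations" and be contained in the height-preserving subgroup.

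Next, choosing $t\in\Gamma$ with $h\circ t$ equal to the positive generator, I would build $L$ and $L'$ as stabilizers of appropriate horoballs. Concretely, $L$ should be the stabilizer in $H$ of a "downward" horoball based at the chosen end of $T_m$ (intersected with $\DL(m,n)$), and $L'$ the stabilizer of the analogous horoball on the $T_n$ side. The branching of $T_m$ gives that $tLt\inv$ has index $m$ in $L$: pushing the horoball down one level by conjugating by $t$ lands inside a horoball that is $m$-to-$1$ covered, and similarly for $L'$ with index $n$. The increasing-union statements $\bigcup_k t^{-k}Lt^k=H$ reflect that every element of $H$ eventually stabilizes a large enough horoball; $L\cap L'$ stabilizes both a downward horoball in $T_m$ and one in $T_n$, i.e.\ a bounded piece, so it is compact; and $LL'=H$ (equivalently $L\backslash H/L'$ a point) is the transitivity of $H$ on level-$0$ vertices, which holds because $\Isom^+$ acts transitively on each level and $\Gamma$ cocompact plus the horoball structure forces the same for $H$. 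Finally, no nontrivial compact normal subgroup: such a subgroup would fix a point in $\DL(m,n)$ (compact groups acting on the tree-like graph fix horoballs from both sides, hence a vertex) and being normal would fix everything; and discreteness forcing $m=n$ comes from the modular function — $\Isom(\DL(m,n))$ is unimodular iff $m=n$, and a lattice exists only in the unimodular case.

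For part (b), I would reverse-engineer the graph from the algebraic data. Using $tLt\inv\le L$ of index $m$ and the union condition, the coset tree $\varprojlim L/t^{k}Lt^{-k}$ gives a rooted $m$-ary-branching structure, which I complete to the $(m+1)$-regular tree $T_m$ with a distinguished end, on which $H$ (hence $\Gamma$) acts; symmetrically $L'$ gives the action on $T_n$. The condition $L\cap L'$ compact ensures the diagonal action of $H$ on $T_m\times T_n$ is proper, and the double-coset cardinality $d$ controls the number of $H$-orbits on each level of the horocyclic product; combined with $LL'$-type fibration one gets transitivity exactly when $d=1$, but in general $d$ finite still yields a proper action, and a standard argument (replacing $\DL(m,n)$ by a suitable "blown up" graph, or noting that finite double-coset space means finitely many orbits which can be merged) gives a proper transitive action on $\DL(m,n)$ itself with the stated kernel $\bigcap_k t^{-k}(L\cap L')t^{k}$ — this is precisely the set of elements acting trivially on both trees. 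Local finiteness of $H$ follows since $H=\bigcup_k t^{-k}Lt^{k}$ is an increasing union of compact (profinite) groups once we know $L$ is compact, which in turn follows from $L\cap L'$ compact together with $L/(tLt\inv)$ finite and $L=\bigcup$ ... — more precisely $L$ acts on $T_m$ fixing an end and on a horoball with compact vertex stabilizers, so $L$ is profinite; being a union of finite quotients with finite kernels it is locally finite, hence so is $H$. Discreteness forcing finite generation and $m=n$: discrete $\Gamma$ with these properties makes $L,L'$ finite, $H$ locally finite, and $\Gamma=H\rtimes\langle t\rangle$ is generated by $t$ together with a finite generating set of $L\cap L'$-coset representatives — a finite set; the modular-function computation again gives $m=n$.

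The main obstacle I anticipate is the $LL'=H$ / finite double coset bookkeeping in part (b): translating "$L\backslash H/L'$ has cardinality $d$" into a genuine \emph{proper and transitive} action on the standard $\DL(m,n)$ (not merely some Diestel–Leader-like complex) requires care, since a priori one only gets transitivity on each horocyclic level up to $d$ orbits, and one must argue these can be amalgamated — equivalently, that the $d$-fold ambiguity is absorbed by enlarging the tree data or by a covering trick — without destroying properness or changing the parameters $m,n$. Identifying the kernel precisely as $\bigcap_{k}t^{-k}(L\cap L')t^{k}$ is then a matter of checking that an element acts trivially on $T_m$ iff it lies in all the conjugates $t^{-k}Lt^{k}$ and likewise for $T_n$ and $L'$.
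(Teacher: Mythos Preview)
Your overall architecture matches the paper's: the height homomorphism produces $H$, vertex/horoball stabilizers on the two trees give $L$ and $L'$, and for (b) the coset-tree construction is essentially the Bass--Serre tree of the ascending HNN extension $\HNN(H,L,t)$, which is exactly what the paper uses. The kernel identification and the unimodularity argument for $m=n$ are also as in the paper.

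The genuine gap is precisely at the step you yourself flag as the main obstacle in (b), and which you gloss over in (a). In (a) you assert that $LL'=H$ is ``the transitivity of $H$ on level-$0$ vertices, which holds because $\Isom^+$ acts transitively on each level and $\Gamma$ cocompact \ldots\ forces the same for $H$.'' Cocompactness does \emph{not} force transitivity: it only gives that $H$ has finitely many orbits on a horosphere, i.e.\ that $L\backslash H/L'$ is finite. The paper's proof makes exactly this weaker deduction, writes $H=LFL'$ for a finite set $F\subset H$, and then uses the increasing union $H=\bigcup_k t^{k}Lt^{-k}$ to find $k$ with $F\subset t^{k}Lt^{-k}$. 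Replacing the basepoint $v$ by $t^{k}v$ (equivalently replacing $L$ by $t^{k}Lt^{-k}$) then yields $LL'=H$ without disturbing the other conclusions. This conjugation-by-a-power-of-$t$ trick is the missing idea.

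The same trick resolves your anticipated obstacle in (b): the diagonal action on $T\times T'$ you build from $(L,L')$ has exactly $d$ orbits on $\DL(m,n)$, and replacing $L$ by a suitable $t^{k}Lt^{-k}$ reduces to $d=1$ while preserving $[L:tLt^{-1}]=m$ and all other hypotheses. No ``blown-up'' graph or orbit-merging is needed; the parameters $m,n$ do not change.

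A smaller point: in (b) you argue that $L$ is profinite because it ``acts on $T_m$ \ldots\ with compact vertex stabilizers.'' This is circular (you are constructing the action) and in fact false in general: $L$ is the vertex stabilizer in $T_m$, and it need not be compact. What is true is that $L\cap t^{-k}L't^{k}$ is compact for every $k\ge 0$ (by induction from $L\cap L'$ compact and $[t^{-1}L't:L']=n$), so $L=\bigcup_k(L\cap t^{-k}L't^{k})$ is an increasing union of compact open subgroups; this is what gives local ellipticity of $H$.
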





The combination of the two main points of the theorem yields a result we find surprising:
namely that any lattice $\Gamma<\Isom(\DL(n,n))$, or more generally any closed cocompact subgroup
 $\Gamma< \Isom(\DL(m,n))$ has an index two subgroup that admits a proper transitive action on
 another Diestel-Leader graph. We will also give a short geometric
proof of this fact that avoids the need to pass to a subgroup of index two, see Proposition \ref{proposition:transitive} below.  Of course,   any lattice $D$ in the isometry group of
any graph admits a proper transitive action on some graph, namely its own Cayley graph.
What is surprising here is that the transitive action is on another Diestel-Leader graph.
The only other class of graphs for which a similar statement is known are trees. As Diestel-Leader
graphs are clearly closely related to trees, it would be interesting to further study the analogies between cross-wired lamplighters and tree lattices.

We next provide examples of cross-wired lamplighters.

\begin{example}[lamplighters]
A standard wreath product $F\wr\Z$, where $F$ is any non-trivial finite group, is a cross-wired lamplighter (these examples were used by Erschler (Dyubina) \cite{Dyubina} to observe that being virtually solvable is not a quasi-isometry invariant). Here $L=F^{\Z_{\ge 0}}$ and $L'=F^{\Z_{\le 0}}$.
Note that such a group is residually finite if and only if $F$ is abelian \cite{Gru}.
\end{example}

\begin{example}
Let $q$ be a prime power, and set $\K=\F_q(\!(t)\!)$ and $\A=\F_q[t^{\pm 1}]$. Let $H$ be the Heisenberg group, consisting of upper-triangular square matrices of size three with 1's on the diagonal. Consider the action of $\Z$ on $H(\K)$ and $H(\A)$ defined by the automorphism
$$ \Phi:\begin{pmatrix}
  1 & x & z \\
  0 & 1 & y \\
  0 & 0 & 1 \\
\end{pmatrix}\mapsto \begin{pmatrix}
  1 & tx & t^2z \\
  0 & 1 & ty \\
  0 & 0 & 1 \\
\end{pmatrix}.$$
Then the group $H(\A)\rtimes_\Phi\Z$ is a cross-wired lamplighter. In this construction, $H$ can more generally be replaced by any nilpotent algebraic group with a contraction.
\end{example}

In the two previous examples, we have a natural embedding of $\Gamma$ as a cocompact lattice in a group of the form
$$(G_1\times G_2)\rtimes_{(\alpha_1,\alpha_2^{-1})}\Z,$$
where $(G_i,\alpha_i)$ is a contraction group, i.e.\ $\alpha_i$ is a contracting automorphism of $G_i$ in the sense that $\alpha_i^k(g)\to 1$ when $k\to +\infty$ (pointwise, but this automatically implies the convergence to be uniform on compact subsets).
Results of Glockner and Willis on the structure
of contraction groups \cite{GlocknerWillis} might thus prove useful to prove general structural results about cross-wired lamplighters.
However, it is not clear in general if contraction groups are enough. In general, if $\Gamma$ is a cross-wired lamplighter, then the closure of its projection on $\Aut(T)$, where $T$ is one of the two trees, is of the form
$$G\rtimes_\Phi \Z,$$
where $\Phi$ contracts $G$ modulo a compact subgroup, i.e.\ there exists a compact subgroup $K$ of $G$ such that for every $g$, we have $\Phi^k(g)\to 1$ in $G/K$, when $k\to +\infty$. Typically, if $G\rtimes_\alpha\Z$ is the full stabilizer of an end in $T_n$, then $\alpha$ contracts $G$ modulo a (non-trivial) compact subgroup. However, we do not know if a cross-wired lamplighter can project densely on this group.

In Section \ref{ex}, we develop these examples and ask a few additional questions.

Here are some more questions about cross-wired lamplighter we leave open:


\begin{question}~
\begin{enumerate}\item Is there a cross-wired lamplighter that is not ``virtually symmetric"? Here we define a cross-wired lamplighter to be symmetric if it admits an automorphism $\alpha$ such that if $\pi$ is the projection to $\Z$ (which is unique up to sign) then $\alpha\circ\pi\circ\alpha^{-1}=-\pi$, and virtually symmetric if it admits a symmetric subgroup of finite index.
\item Given a cross-wired lamplighter, is the pair $\{L,L'\}$ unique up to commensurability and automorphisms? Precisely, suppose that an abstract group $\Gamma$ is given two embeddings as a cross-wired lamplighter, giving rise to pairs $(L_1,L'_1)$ and $(L_2,L'_2)$. Is there (up to swapping $L_2$ and $L'_2$) an automorphism $\beta$ of $\Gamma$ such that $\beta(L_1)$ is commensurable with $L'_1$ and $\beta(L'_2)$ with $L'_2$?
\item Let $\Gamma$ be a closed cocompact subgroup of
$\Isom(\DL(n,n))$ {\em not} contained in $\Isom^+(\DL(n,n))$; it is easy to deduce from Theorem \ref{mainthm} that there
is a unique open normal subgroup $H$ such that $\Gamma/H$ is isomorphic to the
infinite dihedral group $D_\infty$. Is the extension $1\to H\to \Gamma\to D_\infty\to 1$
necessarily split?

\end{enumerate}
\end{question}






In Section \ref{idl}, we recall basic definitions
concerning Diestel-Leader graphs and prove Theorem
\ref{mainthm}.  In Section \ref{ex}, we describe various examples
of cross-wired lamplighters and we explain the choice
of title for this paper and name for these groups.


\medskip

\noindent\textbf{Acknowledgements.} We thank Laurent Bartholdi and Tullia Dymarz for useful remarks and corrections.

\section{Isometries of Diestel-Leader graphs}\label{idl}

\subsection{Preliminary remarks}
We begin by describing Diestel-Leader graphs. More detailed introductions to Diestel-Leader graphs,
lamplighter groups, and related concepts are given by Woess \cite{Woess}
and Wortman \cite{Wortman}.

Given an integer $n > 0$ we denote by $T_n$ the regular tree of degree $n+1$. We fix a preferred end $-\infty$ of $T_n$. Given two vertices $x, y \in V(T_n)$, the rays from $x$ and $y$ to $-\infty$ intersect to give a ray from some other vertex to $-\infty$. We shall denote this vertex by $\widehat{xy}$ and call it the confluent of  $x$ and $y$. By making a choice of preferred vertex $o$, we can define a Busemann function $b: V(T_n) \rightarrow \mathbf{Z}$ on the vertices of $T_n$ by
\[b(x) := d(x,\widehat{xo}) - d(o,\widehat{xo}).\]
This partitions the vertices of $T_n$ into level sets of $h$ which are called horocycles and denoted $\mathcal{F}_k$. A vertex in the horocycle $\mathcal{F}_k$ has one neighbour, its parent, in $\mathcal{F}_{k-1}$ and $n$ neighbours, its children, in $\mathcal{F}_{k+1}$.

Given two regular trees $T_m$ and $T_n$ and some choice of height functions $b$ and $b'$ respectively, the Diestel-Leader graph $\DL(m,n)$ is given by
\[V(\DL(m,n)) := \{(x,y) \in T_m \times T_n : b(x) + b'(y) = 0\},\]
where two vertices $(x,y)$ and $(x',y')$ are joined be an edge if $(x,x')$ is an edge in $T_m$ and $(y,y')$ is an edge in $T_n$.  Note that this description gives an embedding of $\DL(m,n)$ into $T_m \times T_n$. Any other choice of Busemann function on the trees gives rise to another such embedding (because any two Busemann functions on a regular tree are conjugate under some automorphism of the tree). In particular, each level set of the function $b(x) + b'(y)$ on $T_m \times T_n$ with edges defined analogously will be an embedded $\DL(m,n)$.

Note that we can define a Busemann function on $\DL(n,m)$ just by
taking $h(x,y)=b(x)=-b'(y)$.  We call the level sets of this Busemann function
a horosphere and denote it by $\mathcal H _k$.  A geodesic is called {\em vertical} if it
intersects each $\mathcal H_k$ exactly once.  Note that a horosphere is a product of horocycles,
one in $T_m$ one in $T_n$.

As observed in \cite{BNW}, the Diestel-Leader graphs only have unimodular isometry groups if $n=m$ and so only contain lattices
in this case.  

Let $U_0$ denote the isometries of $T_n$ which fix $-\infty$ and preserve the level sets of $h$.
Let $U'_0$ be the corresponding subgroup of isometries of $T_m$. It is clear that $U=U'_0 \times U_0$ is contained in $\Isom(\DL(m,n)$. The groups $U_0, U'_0$ and $U$ are elliptic, i.e.\ every compact subset is contained in an open compact subgroup.

Recall that
each $T_n$ (or $T_m$) has a preferred end which we labeled $-\infty$. Let $\phi_n$ be a hyperbolic isometry of $T_n$, of translation length $1$, and having $-\infty$ as repelling end and define $\phi_m$ similarly for $T_m$. Define the isometry $(x,y)\mapsto (\phi_m(x),\phi_n^{-1}(y))$ of $T_m\times T_n$. It restricts to an isometry of $\DL(n,n)$.  When $m=n$, there is also an isometry of $T_n \times T_n$ given
by interchanging the trees. This isometry restricts to an isometry of $\DL(n,n)$ which we call $\psi$.  We sometimes
refer to $\psi$ (or its conjugates) as a {\em flip}.

Thus $\Isom(\DL(n,n))$ contains the subgroup generated $U$, $\psi$ and $\phi$. The following result of Bartholdi, Neuhauser, and Woess \cite{BNW} shows that there are no further isometries.



\begin{prop}
\label{isometries}
For $m>n\ge 2$,
\[\Isom(\DL(m,n)) = \left(U \rtimes_{\phi} \mathbf{Z}\right)\]
For any $n\ge 2$,
\[\Isom(\DL(n,n)) = \left(U \rtimes_{\phi} \mathbf{Z}\right) \rtimes_{\psi} \mathbf{Z}/2\mathbf{Z}.\]
\end{prop}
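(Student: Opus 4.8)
The plan is to recover the two trees $T_m$, $T_n$ together with their distinguished ends purely from the graph structure of $\DL(m,n)$; once this is done, every isometry automatically permutes the fibers over each tree and is therefore induced by a pair of tree isometries fixing $-\infty$, which is exactly the assertion. (This is the theorem of Bartholdi--Neuhauser--Woess; the sketch below is one way to organize it.)

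\textbf{Up versus down is intrinsic.} Write a vertex as $v=(x,y)$; it has $m$ ``up'' neighbors $(x_i^+,y^-)$ and $n$ ``down'' neighbors $(x^-,y_j^+)$. A direct count shows that two up-neighbors of $v$ have exactly $n$ common neighbors, two down-neighbors exactly $m$, while an up-neighbor together with a down-neighbor have only $v$ as a common neighbor. Since $m,n\ge 2$, the relation ``having at least two common neighbors'' splits the neighbor set of $v$ into exactly two cells, the set of up-neighbors and the set of down-neighbors, so this bipartition is graph-intrinsic. If $m\ne n$ the cells have distinct sizes, so any isometry preserves ``up'' and ``down'' separately; if $m=n$, a short connectedness argument shows that an isometry either preserves this dichotomy at every vertex or reverses it at every vertex, and since the flip $\psi$ reverses it (it interchanges up- and down-neighbors), after possibly composing with $\psi$ we may assume ``up/down'' is preserved. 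In all cases there is then a constant $c$ with $h(gv)=h(v)+c$.

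\textbf{Fibers are intrinsic.} Having a notion of upward path, consider the set $S_j(v)$ of endpoints of upward paths of length $j$ starting at $v$: these are the pairs whose $T_n$-coordinate is a single fixed ancestor of $y$ (independent of the path) and whose $T_m$-coordinate runs over all level-$j$ descendants of $x$. Hence for $v,v'$ of the same height and $j$ large, the $T_n$-coordinates occurring in $S_j(v)$ and $S_j(v')$ automatically agree, and $S_j(v)=S_j(v')$ then forces the descendant sets, hence $x$ and $x'$, to coincide; so ``$v$ and $v'$ have the same $T_m$-coordinate'' is equivalent to ``$S_j(v)=S_j(v')$ for all large $j$'', a graph-intrinsic relation. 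The mirror argument with downward paths recovers the $T_n$-fibers. Consequently an isometry $g$ preserving up/down induces bijections $g_1$ of $V(T_m)$ and $g_2$ of $V(T_n)$ with $g(x,y)=(g_1x,g_2y)$; because $g$ sends up-edges to up-edges, $g_1$ sends children to children and parents to parents, so $g_1\in\Isom(T_m)$ fixes $-\infty$ and shifts its Busemann function by $c$, and likewise $g_2\in\Isom(T_n)$ fixes $-\infty$ and shifts its Busemann function by $-c$.

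\textbf{Assembling the group.} The isometry $\phi$ is precisely such a pair with $c=1$, so $\phi^{-c}g$ has $c=0$ and hence lies in $U=U_0'\times U_0$. Therefore $\Isom(\DL(m,n))=U\rtimes_\phi\Z$ when $m\ne n$, and allowing the optional flip, $\Isom(\DL(n,n))=(U\rtimes_\phi\Z)\rtimes_\psi\Z/2\Z$; for the latter one checks $\psi^2=1$, $\psi\notin U\rtimes_\phi\Z$ (it reverses $h$), and that $\psi$ normalizes $U\rtimes_\phi\Z$ (conjugation by $\psi$ swaps the two tree factors and sends $\phi$ to $\phi^{-1}$ modulo $U$). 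I expect the second step to be the real obstacle: one has to isolate the right combinatorial invariant that reconstructs the horocyclic-product structure and check it is preserved by isometries; the first and third steps are then essentially bookkeeping.
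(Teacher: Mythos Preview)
The paper does not prove this proposition: it is quoted from Bartholdi--Neuhauser--Woess \cite{BNW} and used as a black box. Your proposal therefore supplies what the paper omits, and the sketch is correct. The common-neighbor count in Step~1 is right (two up-neighbors share exactly $n$ common neighbors, two down-neighbors $m$, and a mixed pair only $v$), and the characterisation of $T_m$-fibers via the sets $S_j(v)$ works because an upward path is deterministic on the $T_n$-side; the only point to make explicit is that $g(S_j(v))=S_j(gv)$ once $g$ preserves up-edges, so the fiber relation is preserved and $g_1,g_2$ are well-defined. This is essentially the line of argument in \cite{BNW}.
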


Using the proposition, we fix some notation. We let $G=\Isom^+(\DL(n,n))$ be the index two subgroup $U \rtimes_{\phi} \mathbf{Z}$ when $m=n$ and $G=\Isom(\DL(m,n))$ when $m \neq n$. We let $\pi:G \rightarrow \Z$ be the natural projection so $U = \ker \pi$.


We also deduce some corollaries from the proposition.  The first is obvious.

\begin{cor}
\label{corollary:pairsoftrees} The full group $\Isom(\DL(m,n))$ acts
isometrically on the product $T_m \times T_n$ permuting the level
sets of the function $b(x)+b'(y)$.
\end{cor}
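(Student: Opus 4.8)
The plan is to read the action on $T_m\times T_n$ directly off Proposition \ref{isometries}. By construction each of the generators $U=U'_0\times U_0$, $\phi$, and (when $m=n$) $\psi$ of $\Isom(\DL(m,n))$ is the restriction to $\DL(m,n)$ of an isometry of $T_m\times T_n$: elements of $U$ restrict from product isometries $(\alpha,\beta)$ with $\alpha\in U'_0\subset\Isom(T_m)$ and $\beta\in U_0\subset\Isom(T_n)$; $\phi$ restricts from the product isometry $(\phi_m,\phi_n\inv)$; and $\psi$ restricts from the flip of $T_m\times T_n$. So the first step is to promote these extensions to a homomorphism $\Isom(\DL(m,n))\to\Isom(T_m\times T_n)$. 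For this it suffices to check that an isometry of $T_m\times T_n$ is determined by its restriction to $\DL(m,n)$: the coordinate projections $V(\DL(m,n))\to V(T_m)$ and $V(\DL(m,n))\to V(T_n)$ are onto (for any $x\in V(T_m)$ there is a $y$ with $b'(y)=-b(x)$, since $b'$ is surjective), so a product isometry, or the flip composed with one, is pinned down on each factor by its values on $\DL(m,n)$. Hence restriction is an injective homomorphism from the subgroup of $\Isom(T_m\times T_n)$ generated by $U'_0\times U_0$, $(\phi_m,\phi_n\inv)$ and the flip onto $\Isom(\DL(m,n))$ (surjectivity being exactly Proposition \ref{isometries}), and its inverse is the desired action.

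It then remains to see that this action permutes the level sets of $F(x,y):=b(x)+b'(y)$, and it is enough to check this on generators since the property of carrying level sets of $F$ to level sets of $F$ is closed under composition and inversion. A product isometry $(\alpha,\beta)$ with $\alpha$ fixing the preferred end of $T_m$ satisfies $b\circ\alpha=b+c_\alpha$ for a constant $c_\alpha\in\Z$ (two Busemann functions for the same end of a tree differ by a constant), and likewise $b'\circ\beta=b'+c_\beta$; thus $F\circ(\alpha,\beta)=F+(c_\alpha+c_\beta)$, so $(\alpha,\beta)$ carries $\{F=k\}$ to $\{F=k-c_\alpha-c_\beta\}$. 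This covers $U'_0\times U_0$ (with $c_\alpha=c_\beta=0$, as these groups preserve horocycles) and $\phi=(\phi_m,\phi_n\inv)$ (with $c_{\phi_m}+c_{\phi_n\inv}=0$, since $\phi_m$ and $\phi_n\inv$ translate $b$ and $b'$ by opposite amounts — precisely what makes $\phi$ preserve $\DL(m,n)=\{F=0\}$). When $m=n$ we may take $b=b'$, and then the flip $\psi$ satisfies $F\circ\psi=F$. So every generator, hence every element of $\Isom(\DL(m,n))$, permutes the level sets of $F$; and, as noted just after the definition of $\DL(m,n)$, each such level set with its induced adjacency is an embedded copy of $\DL(m,n)$.

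I do not expect a genuine obstacle: all the content sits in Proposition \ref{isometries}, and the rest is the bookkeeping above. The one point worth stating with care is the uniqueness of the extension from $\DL(m,n)$ to $T_m\times T_n$ (via surjectivity of the coordinate projections), since that is what legitimizes speaking of \emph{the} action on $T_m\times T_n$.
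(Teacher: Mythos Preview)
Your argument is correct and follows the same route as the paper: deduce the action on $T_m\times T_n$ directly from the generator description in Proposition~\ref{isometries}. The paper itself offers no proof beyond declaring the corollary ``obvious'', so your write-up simply makes explicit the bookkeeping the authors suppress---in particular the uniqueness of the extension (via surjectivity of the coordinate projections), which is the one point genuinely worth saying out loud.
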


In particular, since every level set $b(x)+b'(y)=k$ is another copy
of the Diestel-Leader graph $\DL(m,n)$, which we will label for
clarity $\DL(m,n)_k$, any group $\Gamma < \Isom(\DL(m,n))$ has
infinitely many actions on $\DL(m,n)$. It will be important to us
that these actions are usually actually different. Let $\Gamma' = G \cap \Gamma$.

\begin{cor}
\label{cor:translationelement} If $\Gamma < \Isom(\DL(m,n))$ is a
closed, cocompact subgroup, then there is a natural number $m$ and an element $t$ in $\Gamma$
which is conjugate to $\phi^m$ such that $\Gamma' = (\Gamma' \cap U) \ltimes \langle t\rangle$.
\end{cor}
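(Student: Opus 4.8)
The plan is to produce the element $t$ by understanding how $\Gamma$ sits inside $G = U \rtimes_\phi \Z$ (recall $\Gamma' = G \cap \Gamma$ has finite index in $\Gamma$, since $G$ has index at most two in the full isometry group). Compose with the projection $\pi : G \to \Z$ and consider the image $\pi(\Gamma')$. This is a subgroup of $\Z$, hence either trivial or of the form $m\Z$ for some $m \ge 1$. First I would rule out the trivial case: if $\pi(\Gamma') = 0$ then $\Gamma' \subseteq U$, so $\Gamma'$ is contained in an elliptic group and therefore cannot act cocompactly on $\DL(m,n)$ (an elliptic group fixes a point, or more precisely every compact subset lies in a compact subgroup, so the orbits of $\Gamma$ are bounded up to finite index — contradicting cocompactness since $\DL(m,n)$ has infinite diameter). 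Hence $\pi(\Gamma') = m\Z$ for some natural number $m \ge 1$.

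Next I would choose $t \in \Gamma'$ with $\pi(t)$ a generator of $m\Z$, i.e.\ $\pi(t) = m$ (after possibly replacing $t$ by its inverse). Since $U = \ker\pi$, the subgroup $\langle t \rangle$ maps isomorphically onto $m\Z = \pi(\Gamma')$, and $\Gamma' \cap U = \ker(\pi|_{\Gamma'})$, so we get the internal semidirect product decomposition $\Gamma' = (\Gamma' \cap U) \rtimes \langle t \rangle$: every $\gamma \in \Gamma'$ has $\pi(\gamma) = km$ for a unique $k \in \Z$, and then $\gamma t^{-k} \in \Gamma' \cap U$. The group $\Gamma' \cap U$ is normal in $\Gamma'$ because $U$ is normal in $G$, which is what the $\ltimes$ notation records.

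It remains to see that $t$ can be taken to be conjugate in $\Isom(\DL(m,n))$ to $\phi^m$. Writing $t = u\phi^m$ with $u \in U$, this is exactly the statement that an isometry projecting to a hyperbolic translation of translation length $m$ on the $\Z$-factor is, up to conjugacy by an element of $U$, equal to the ``standard'' such translation $\phi^m$. I expect this to be the main technical point. The argument I would give is the usual fixed-point / conjugation argument for hyperbolic isometries of trees: working in one of the two trees, say $T_n$, the image of $t$ is a hyperbolic isometry of translation length $m$ with repelling end $-\infty$; such an isometry has a well-defined attracting end $\xi$, and conjugating by an element of $U_0$ (which acts transitively on ends other than $-\infty$ at the appropriate horocycle level) moves $\xi$ to the attracting end of $\phi^m$, after which the two isometries agree because a hyperbolic isometry of a regular tree is determined by its axis, translation length, and direction. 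Doing this compatibly in both trees — which is possible since $U = U_0' \times U_0$ is a direct product and the two trees may be handled independently — yields $w \in U$ with $w t w^{-1} = \phi^m$. One should double-check that the conjugating element can be chosen in $\Gamma$ is \emph{not} needed: the statement only asks that $t$ be conjugate to $\phi^m$ in the ambient isometry group, while the decomposition $\Gamma' = (\Gamma' \cap U) \ltimes \langle t\rangle$ is internal to $\Gamma'$. This completes the proof.
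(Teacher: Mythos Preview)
Your overall strategy matches the paper's: the paper's proof is essentially one sentence (show $\pi(\Gamma')$ is infinite by cocompactness, lift a generator $d$ of $\pi(\Gamma')$ to $t\in\Gamma$, and declare the remaining checks ``straightforward''), and you are filling in exactly those checks. The semidirect product decomposition is handled correctly.

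There is, however, a genuine gap in your conjugacy argument. You assert that ``a hyperbolic isometry of a regular tree is determined by its axis, translation length, and direction.'' This is false once the tree has degree $\ge 3$: if $g$ is hyperbolic with axis $\gamma$ and $h$ is any nontrivial automorphism fixing $\gamma$ pointwise (for instance, one swapping two of the branches hanging off a single vertex of $\gamma$), then $hg$ has the same axis, the same translation length, and the same direction as $g$, yet $hg\neq g$. So after you conjugate $t$ by an element of $U$ so that its axis in each tree coincides with that of $\phi^m$, you are not yet done; the two isometries may still differ by such an $h$.

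The repair is short. Let $P$ be the pointwise stabilizer of the common axis in one tree factor; then $P\cong\prod_{k\in\Z}P_k$, with $P_k$ acting on the branches at the $k$-th axis vertex, and conjugation by $\phi^m$ acts on $P$ as the shift by $m$. Solving $w\,(\phi^m w^{-1}\phi^{-m})=h$ for $w\in P$ becomes the recursion $w_k=h_k\cdot w_{k-m}$ (identifying $P_{k-m}$ with $P_k$ via $\phi^m$), which one solves by fixing $w_0,\dots,w_{m-1}$ arbitrarily and propagating in both directions. Doing this independently in each tree factor produces the desired conjugating element in $U=U_0'\times U_0$. Alternatively, note that the proof of Theorem~\ref{mainthm}(\ref{tta}) only uses that $t$ acts hyperbolically on each tree with $-\infty$ as a fixed end; your argument already establishes this weaker statement without the faulty step.
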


\begin{proof}
  It is clear that the image of $\pi$ restricted to $\Gamma'$ must be infinite for $\Gamma$ to be cocompact. Let $d$ be the (positive) generator of $\pi(\Gamma')$ and choose a lift $t$ of $d$
to $\Gamma$.  It is straightforward to check that $t$ has the desired properties.
\end{proof}

As promised in the introduction, we now give a direct proof that any group acting properly
and cocompactly on $\DL(n,n)$ admits a proper, transitive action on some $\DL(n',n')$.

\begin{prop}
\label{proposition:transitive} Let the locally compact group $\Gamma$ act properly and cocompactly on $\DL(n,n)$.
Then there is a positive integer $d$ such that $\Gamma$ acts
properly and transitively on $\DL(m^d,n^d)$.
\end{prop}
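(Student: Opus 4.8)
The plan is to identify $\DL(n^d,n^d)$ with a ``coarsening at scale $d$'' of $\DL(n,n)$ and to choose $d$ so that a cocompact $\Gamma$ becomes transitive. For $d\ge 1$, let $Y_d$ be the graph whose vertices are the $(x,y)\in V(\DL(n,n))$ lying in a horosphere $\mathcal H_j$ with $j\in d\Z$, and where $(x,y)$ is joined to $(x',y')$ when $x'$ is the ancestor of $x$ at distance $d$ in $T_n$ and $y'$ the descendant of $y$ at distance $d$, or vice versa. Since a vertex of $T_n$ has $n^d$ descendants at distance $d$, one checks that $Y_d$ is isometric to $\DL(n^d,n^d)$ (the rescaled Busemann functions being $b/d$ and $b'/d$), and $\Gamma$ acts on $Y_d$ — on vertices and on edges, isometries of $\DL(n,n)$ being induced by isometries of the two trees — as soon as the homomorphism $\tilde\pi\colon\Gamma\to\Isom(\Z)$ describing the action of $\Gamma$ on the set $\{\mathcal H_j\}$ of horospheres (which extends $\pi$ on $G$) has image preserving $d\Z$.

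First I would fix $d$. Let $\Gamma'=\Gamma\cap G$; by Corollary \ref{cor:translationelement} there are a positive integer $d_1$ and $t\in\Gamma'$ with $\Gamma'=(\Gamma'\cap U)\rtimes\langle t\rangle$ and $t$ conjugate to $\phi^{d_1}$, so that $\tilde\pi(t)$ is translation by $d_1$; I take $d=d_1$. If $\Gamma=\Gamma'$ then $\tilde\pi(\Gamma)=d_1\Z$, which preserves $d_1\Z$. If $[\Gamma:\Gamma']=2$, then $\tilde\pi(\Gamma)$ also contains reflections, whose centres all lie in a single residue class modulo $d_1$ (a product of two of them is a translation by a multiple of $d_1$); here I would use Corollary \ref{corollary:pairsoftrees} to replace $\DL(n,n)$ by the parallel copy $\DL(n,n)_k\subset T_n\times T_n$ for a suitable $k$, which translates this residue class by $k$ (on $\DL(n,n)_k$ the flip $\psi$ swaps horosphere $j$ with horosphere $k-j$, while the positive part of a flip contributes a $k$-independent shift), and then conjugate by a power of $\phi$, so that every reflection centre lies in $d_1\Z$. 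Then $\tilde\pi(\Gamma)\subseteq\{x\mapsto\pm x+kd_1:k\in\Z\}$ preserves $d_1\Z$, so $\Gamma$ acts on $Y:=Y_{d_1}\cong\DL(n^{d_1},n^{d_1})$, and this action is proper because $V(Y)\subseteq V(\DL(n,n))$.

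It remains to show that $\Gamma$ acts \emph{transitively} on $Y$, which is the heart of the matter. On $Y$ the element $t$ moves horospheres by exactly $1$, so $\Gamma$ is transitive on the horospheres of $Y$; hence the number of $\Gamma$-orbits on $Y$ equals the number of orbits of $\mathrm{Stab}_\Gamma(\mathcal H_0)$ on $\mathcal H_0$, and so is at most the number of orbits of $V:=\Gamma\cap U$ on $\mathcal H_0$. It therefore suffices to show that $V$ acts transitively on the horosphere $\mathcal H_0$, which is a product of two horocycles. I would deduce this from part (a) of Theorem \ref{mainthm}, applied to $\Gamma'$ acting on $Y$: its open normal subgroup $H$ is here $V$, and the theorem supplies open subgroups $L,L'$ of $V$ with $LL'=V$; these can be chosen so that $\mathcal H_0$ is $V$-equivariantly $(V/L')\times(V/L)$ with $V$ acting diagonally, and the number of $V$-orbits on such a product equals $|L\backslash V/L'|$, which is $1$ precisely because $LL'=V$. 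This gives the proposition with $d=d_1$.

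I expect the transitivity of $V$ on a horosphere to be the real obstacle: cocompactness of $\Gamma$ only bounds the number of $V$-orbits on $\mathcal H_0$ (equivalently $|L\backslash V/L'|<\infty$), and collapsing it to one point is exactly where the cross-wired structure is used. If one prefers not to cite Theorem \ref{mainthm}, one has to reprove $LL'=V$ directly — cocompactness gives $|L\backslash V/L'|<\infty$, and then the contraction dynamics ($tLt^{-1}$ of finite index in $L$, $\bigcup_k t^{-k}Lt^k=V$, and symmetrically for $L'$) are fed into a pigeonhole argument which, after coarsening at the natural scale $d_1$, forces the double-coset space down to a point. A secondary point, needed only when $\Gamma$ has a flip, is the verification that passing to $\DL(n,n)_k$ translates the common residue of the reflection centres, indicated parenthetically above.
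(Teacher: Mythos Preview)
Your argument is logically sound but takes a different route from the paper, and in doing so it partly undercuts the purpose of the proposition. The paper's proof is meant to be a short \emph{direct} geometric argument, independent of Theorem~\ref{mainthm}: after collapsing to $\DL(m^d,n^d)$ it takes finitely many $\Gamma'_0$--orbit representatives $(x_1,y_1),\ldots,(x_l,y_l)$ in $\mathcal H_0$, picks a common ancestor $\bar x$ of the $x_i$ in the first tree and $\bar y$ of the $y_i$ in the second, and passes to the parallel copy $\DL(m^d,n^d)_k$ containing $(\bar x,\bar y)$. On the horosphere through $(\bar x,\bar y)$ the action of $\Gamma'_0$ is transitive, because any putative second orbit, pushed back down to height~$0$, would produce a point outside all chosen orbit representatives. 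This ancestor trick is exactly the geometric content of the step ``replace $L$ by $t^kLt^{-k}$'' in the proof of Theorem~\ref{mainthm}(\ref{tta}), but it is carried out here without any reference to the algebraic statement. Your proof, by contrast, imports that statement wholesale: you apply Theorem~\ref{mainthm}(\ref{tta}) to $\Gamma'$ acting on $Y$ to obtain $LL'=V$, and read off transitivity from the double-coset count. That is valid (the theorem does not rely on the proposition), but it is not the self-contained geometric proof the paper advertises; and your fallback sketch (``pigeonhole on the contraction dynamics'') is too vague to substitute for the ancestor argument.

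Two further remarks. First, your assertion that $L,L'$ ``can be chosen so that $\mathcal H_0$ is $V$-equivariantly $(V/L')\times(V/L)$'' is not quite right: the replacement $L\mapsto t^kLt^{-k}$ made inside Theorem~\ref{mainthm}(\ref{tta}) shifts the height of the corresponding vertex, so what you actually get is a horosphere of some parallel copy $Y_c$, not of $Y_0$. This is harmless (the paper also ends up on a parallel copy), but it should be said. Second, your separate treatment of the flip case --- adjusting $k$ so that the reflection centres land in $d_1\Z$ --- is unnecessary in the paper's approach: there one shows that already $\Gamma'_0\subset\Gamma$ acts transitively on the chosen parallel copy, so transitivity of $\Gamma$ follows without ever needing $\tilde\pi(\Gamma)$ to preserve $d_1\Z$.
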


\begin{proof}
 Given  $\Gamma < \Isom(\DL(m,n))$ there is an element $t \in
\Gamma$ as in Corollary \ref{cor:translationelement}.  Since every element
of $\Gamma$ that does not fix $\mathcal H _0$ translates it to some $\mathcal H _{kd}$
with $k$ an integer,  it follows that the action of $\Gamma$ on
$\DL(m,n)$ can be replaced by an action of $\Gamma$ on a "collapsed"
Diestel-Leader graph $\DL(m^d, n^d)$.  We obtain the collapsed
Diestel-Leader graph from the first one by simply ignoring all
vertices that occur in level sets of $h$ that are not multiples of
$d$ and considering the edges to be vertical geodesic segment between
vertices in $\mathcal H_{dk}$ and vertices in $\mathcal H_{(d+1)k}$.

It is now clear that $\Gamma$ acts transitively on the set of horospheres of
$\DL(m^d, n^d)$.  To see that the action is transitive, we need to
see that the action along some horosphere is transitive.   Let the two trees be $T^1$
and $T^2$.  We let
$\Gamma'_0=\Gamma' \cup U$ the subgroup of elements stabilizing each horocycle.

We consider the horocycle
in $\mathcal H_0$ at height $0$ and note that $\Gamma'_0$ is cocompact on it. Choose
orbit representatives $(x_1,y_1), ...,
(x_l,y_l)$ for this action.   It is clear that there is some height $k^1$ (resp. $k^2$) such that all the
$x_j$ (resp. $y_j$) are on vertical geodesics going down from a single vertex $\bar x$ at height $k^1$ in $T^1$
(resp $\bar y$ at height $k^2$ in $T^2$).

Letting $k=|k^1 +  k^2|$ this implies that the $\Gamma_0'$ action on the horocycle containing $(\bar x, \bar y)$
in $\DL(m^d, n^d)_k$ is transitive.  If it is not, there is some other orbit, with representative $(x', y')$.
Let $x''$ (resp. $y''$) be any vertex at height zero below $x'$ (resp. $y'$) in $T^1$ (resp. $T^2$).  Since
$\Gamma_0'$ cannot move $(x',y')$ to $(\bar x, \bar y)$, it cannot move $(x'', y'')$ to any of our orbit
representatives for the $\Gamma_0'$ action on $\mathcal H_0$, a contradiction. This shows that the $\Gamma$ action on
$\DL(m^d, n^d)_k$ is transitive.
\end{proof}

\subsection{Proof of the theorem}


\begin{proof}
Let us prove (\ref{tta}). Let $\Gamma$ be a closed, cocompact subgroup of $G$. By Corollary \ref{cor:translationelement} there is $t\in\Gamma$ mapped to a generator of the cyclic group $\pi(\Gamma)$, such that $\Gamma=(U\cap\Gamma)\rtimes\langle t\rangle$.

For convenience we denote the two trees as $T$ and $T'$. Note that $t$ acts as a hyperbolic element on both $T$ and $T'$. Fix vertices $v\in T$ and $v'\in T'$ in the translation axes of $t$ in $T$ and $T'$. Let $L\subset\Gamma$ be the stabilizer of $v$ and $L'$ the stabilizer of $v'$ (for the action on $T$ and $T'$ respectively). Then $L$ and $L'$ are open in $\Gamma$ and contained in $U$. Since $v$ belongs to the axis of $t$, we have $tLt^{-1}\subset L$ and $t^{-1}L't\subset L'$. The open subgroup $L\cap L'$ is the stabilizer of $(v,v')$ and therefore is compact.
If $h\in\Gamma_0=\Gamma\cap\Ker(\pi)$, then $t^nht^{-n}\subset L$ for $n$ large enough. Indeed, write $h=(g,g')$. By assumption, $\pi(h)=0$, so $g$ has a fixed point $w$ in $T$. So $g$, hence $h$ also fixes the projection $w_1$ of $w$ on the axis of $v$, since $w_1$ is in the geodesic ray joining $w$ to the end at $-\infty$. So $h$ is contained in the conjugate of $L$ by some power of $t$, that is, $\bigcup_{n\in\Z}t^nLt^{-n}=\Gamma_0$. Similarly $\bigcup_{n\in\Z}t^{-n}L't^{n}=\Gamma_0$.

Let $S$ be the horosphere $\{x\in T:b(x)=0\}$ and similarly define $S'$ in $T'$. We see that two elements of $S\times S'$ are contained in the same orbit under $\Gamma$ if and only if they are contained in the same orbit under $\Gamma_0$. In particular, $\Gamma_0$ has finitely many orbits for its action on $\Gamma_0v\times\Gamma_0v'$, or equivalently for its action on $\Gamma_0/L\times\Gamma_0/L'$. This exactly means that $L\backslash\Gamma_0/L'$ is finite.

Therefore there exists a finite subset $F$ of $\Gamma_0$ such that $\Gamma_0=LFL'$. For some $k\le 0$ we have $F\subset t^kLt^{-k}$.

If we had chosen $t^kv$ instead of $v$, the open subgroup $L$ would be replaced by $t^kLt^{-k}$, and we would get $\Gamma_0=LL'$ without altering the other conclusions.

Let us prove (\ref{ttb}). Given the data of (\ref{ttb}), and denoting by $d$ the cardinality of $L\backslash H/L'$, we give a construction giving the following
\begin{itemize}\item $\Gamma$ has a proper, cocompact action on $\DL(m,n)$ with exactly $d$ orbits;
\item $\Gamma$ has a proper, transitive action on $\DL(n^k,n^k)$ for some $k\le d$.
\end{itemize}

We see that the group $\Gamma$ is naturally identified with the ascending HNN extension $\HNN(H,L,t)$. So it has a natural action on its Bass-Serre tree $T$. Recall the definition of this tree. Write $L_n=t^nLt^{-n}$. Its vertex set can be written as the disjoint union $\bigsqcup_{n\in\Z} H/L_n$, and the vertices are of the form $\{aL_n,aL_{n+1}\}$; in particular, this tree is regular of degree $1+[L:tLt^{-1}]$; the action of $H$ on each coset $H/L_n$ is the natural one, and the action of $t$ is given by $t\cdot (aL_n)=(tat^{-1})L_{n+1}$. Since $tL_nt^{-1}=L_{n+1}$, this is a well-defined continuous action. The stabilizer of a vertex is $L$. If we map $T$ to $\Z$ by sending each vertex in $H/L_n$ to $n$, we obtain a Busemann function $b$.

We can carry out the same construction with $L'$ and get a natural action on a tree $T'$, regular of degree $1+[L':t^{-1}L't]$, with a Busemann function. The diagonal action on $T\times T'$ has stabilizer $L\cap L'$, so is proper. It preserves the Diestel-Leader graph $D$ of equation $b(x)+b'(y)=0$.

We want to check that this action has exactly $d$ orbits. Using the action of $t$, every orbit intersects the subset of equation $b(x)=b(y)=0$. So we are reduced to proving that the action of $H$ on $H/L\times H/L'$ has $d$ orbits, but these orbits exactly correspond to elements of the double coset space $L\backslash H/L'$.

This ends the construction. Now, as in the proof of (\ref{tta}), if we replace $L$ by its conjugate by some power of $t$, we find another subgroup
$L_1$, so that all hypotheses of (\ref{ttb}) are fulfilled with in addition $L_1L'=H$, so the construction above provides a transitive action.

In both (\ref{tta}) and (\ref{ttb}) the fact that $\Gamma$ is discrete implies that it is finitely generated (because it acts properly cocompactly on a locally finite graph) and that $m=n$ because otherwise $\Isom(\DL(m,n))$ is not unimodular and hence does not admit lattices.
\end{proof}

\section{Examples and further comments}\label{ex}

\subsection{Examples}
We now provide some more examples of cross-wired lamplighters which are
not lamplighters, i.e.\ which are not of the form $F \wr {\mathbf Z}$ for some
finite group $F$.  We provide a specific example of a general construction and
indicate the general construction.

We let $\N$ be the Heisenberg group over the local field $\F_p(\!(t)\!)$
and let $N$ be the subgroup matrices with values in  $\F_p[t,
t{\inv}]$.  We have two natural embeddings of $N$ in $\N$, one which
takes $t$ to $t$ and one which takes $t$ to $t{\inv}$.  Combining
these two embeddings, we obtain a map $i:N \rightarrow \N \times
\N$. It is standard to note that $\N$ has a contracting automorphism
$\alpha$, this acts as multiplication by $t$ on the $x$ and $y$
coordinates and by multiplication by $t^2$ on the $z$ coordinate. We
define the group $N \rtimes \Z$ where the $\Z$ action is defined by
restricting $\alpha \times \alpha$ on $\N \times \N$ to $i(N)$. To
verify the statement of Theorem \ref{mainthm}, we let $L=i(\F_p[t])$
and $L'=i(\F_p[t{\inv}])$.  It is straightforward to verify that
the conditions of the theorem are satisfied.

One can easily adapt this construction to other nilpotent
groups $\N$ over local fields of positive characteristic which
admit contracting automorphisms.  Note that in any construction of
this kind, we are constructing a linear group, which is therefore
residually finite.

Note that when $\Gamma = F \wr \mathbf{Z}$ then $\Gamma$ is residually finite
if and only if $F$ is abelian. This is easily checked, since in any finite
quotient of $\Gamma$, the image of $F_i$ is the same as the image of $F_{i+p}$
for some finite $p>1$. Since $F_{i+p}$ and $F_{i}$ commute, it follows that the image of $F_i$
must be abelian.

It would be interesting to construct examples of cross-wired lamplighters which
are not commensurable to either wreath products or linear groups.  Determining whether
or not such examples exist is necessary to answer the questions asked at the end of the
introduction.  The reader should note that groups commensurable to wreath products
are not necessarily wreath products themselves.

\begin{remark}Another generalization of lamplighter groups $F\wr\mathbf{Z}$ ($F$ finite abelian) are Cayley machines associated to arbitrary finite groups \cite{SiSt}. It is suggested to us by L.\ Bartholdi that these might provide other examples of cross-wired lamplighters.\end{remark}



\begin{remark}
If $G$ is a locally compact, compactly generated group and admits a cocompact, continuous, faithful action on a tree with no degree one vertex, then $G$ has no nontrivial compact normal subgroup. Indeed, every cocompact action on such a tree has to be minimal. It follows that every closed cocompact subgroup in $\Isom(\DL(m,n))$ has no nontrivial compact normal subgroup. Thus a locally compact group $\Gamma$ admitting a proper cocompact action on $\DL(m,n)$ ($m,n\ge 2$) has a unique maximal compact normal subgroup $W(\Gamma)$, and $\Gamma$ is actually isomorphic to a closed cocompact subgroup of $\DL(m,n)$ if and only if $W(\Gamma)=\{1\}$.

Here is an example of a discrete group $\Gamma$ with a proper cocompact action on some $\DL(n,n)$ such that $W(\Gamma_1)$ is nontrivial for every finite index subgroup $\Gamma_1$ of $\Gamma$. Namely, let $F$ be a finite group and $Z$ a nontrivial central subgroup of $F$ contained in the derived subgroup $[F,F]$. Start from the wreath product $F\wr\mathbf{Z}=F^{(\mathbf{Z})}\rtimes\Z$ and mod out by the subgroup $Z_0$ of $Z^{(\mathbf{Z})}$ of families $(z_n)_{n\in\Z}$ such that $\sum_{n\in\Z}z_n=0$, and $\Gamma=(F\wr\mathbf{Z})/Z_0$ ($Z_0$ is normal in $F\wr\mathbf{Z}$ because $Z$ is central in $F$). The image of $Z^{(\mathbf{Z})}$ in $\Gamma$ is a central subgroup isomorphic to $Z$, let us call it $Z$. By Gruenberg \cite{Gru}, every finite index subgroup of $F\wr\mathbf{Z}$ contains $[F,F]^{(\mathbf{Z})}$. Thus every finite index subgroup $\Gamma_1$ of $\Gamma$ contains the finite normal subgroup $Z$ (and thus it easily follows that $W(\Gamma_1)=Z$).
\end{remark}

\subsection{The name ``Cross-wired lamplighters".}

A lamplighter group $ \sum F \rtimes \Z$ is usually interpreted as follows.  The generator of $\Z$ describes
the walk of a lamplighter along an infinite row of lamps with $|F|$ states.  The generators of $F$ represent
the lamplighters ability to change the state of the lamp at his current position.  (The description is most
intuitive where $F=\Z / 2\Z$ and lamps are simply on or off.)

All of our examples of cross-wired lamplighters have a similar but more complex structure.  The generator of $\Z$ can again be
viewed as letting the lamplighter walk along an infinite row of lamps.  And in each example there is a finite group $F$ which can
again be viewed as allowing the lamplighter to change the state of some lamp at his current position.  However,
in this setting, the wires of the lamps are ``crossed" and changing the setting of the lamp at the current position
may change the setting of other lamps at other positions.  In all of our examples these ``cross-wires" arise from
a kind of non-commutativity based on a nilpotent structure.  It would be interesting to know if all ``cross-wired lamplighters" exhibit similar structure.

\bibliographystyle{alpha}

\begin{thebibliography}{10}
\bibitem{BNW}
L. Bartholdi, M. Neuhauser, W. Woess. \emph{ Horocyclic products of
trees}. J. European Math. Society 10 (2008) 771--816.

\bibitem{DiestelLeader}
R. Diestel, I. Leader.
\emph{ A conjecture concerning a limit of non-Cayley graphs}.
J. Algebraic Combin. 14 (2001), no. 1, 17--25.

\bibitem{Dyubina} A. Dyubina. \newblock {\em Instability
the virtual solvability and the property of being virtually
torsion-free for quasi-isometric groups}. \newblock Int. Math.
Res. Not. {\bf 21}, 1097--1101, 2000.

\bibitem{EFW1}
A. Eskin, D. Fisher, K. Whyte. \emph{Quasi-isometries and rigidity
of Solvable Groups}. Pure and Applied Mathematics Quarterly 3 (2007)
927--947.

\bibitem{EFW2}
A. Eskin, D. Fisher, K. Whyte. \emph{ Coarse Differentiation of
Quasi-isometries I: spaces not quasi-isometric to Cayley graphs}. Annals of Math. 176 (2012) 221--260.


\bibitem{EFW3}
A. Eskin, D. Fisher, K. Whyte. \emph{Coarse Differentiation of
Quasi-isometries II: rigidity for Sol and Lamplighter groups}. ArXiv 0706.0940v1.

\bibitem{GlocknerWillis}
H. Glockner, G. Willis. \emph{Classification of the simple factors appearing
in the composition series of totally disconnected contraction groups},  J. f\"ur Reine Angew. Math. (Crelles Journal) 643 (2010) 141--169.

\bibitem{Golovin}
O. Golovin. \emph{Nilpotent products of groups.} Amer. Math. Soc.
Transl. (2) 2 (1956), 89--115.

\bibitem{Gru} K. W. Gruenberg.
Residual properties of infinite soluble groups.  Proc.
London Math. Soc. (3) 7, 1957, 29--62.

\bibitem{MacHenry}
T. MacHenry. \emph{The tensor product and the 2nd nilpotent product
of groups.} Math. Z.  73  1960 134--145.

\bibitem{SiSt} P. V. Silva and B. Steinberg. \emph{On a class of automata groups generalizing lamplighter groups}. Internat. J. Algebra Comput. 15 (2005), 1213--1235.

\bibitem{Woess}
W. Woess. \emph{Lamplighters, Diestel-Leader graphs, random walks,
and harmonic functions}. Combinatorics, Probability \& Computing 14
(2005) 415--433.

\bibitem{Wortman}
K. Wortman. \emph{ A Finitely-presented solvable group with a small
quasi-isometry group}. Michigan Math. J. 55 (2007), 3--24.
\end{thebibliography}

\end{document}